\newtheorem{thm}{Theorem}[section]
\newtheorem{cor}[thm]{Corollary}
\newtheorem{lem}[thm]{Lemma}
\theoremstyle{definition}
\theoremstyle{remark}
\numberwithin{equation}{section}
\begin{document}

\title{Matrix regular operator space and operator system}

\author{Kyung Hoon Han}

\address{Department of Mathematical Sciences, Seoul National University, San 56-1 ShinRimDong, KwanAk-Gu, Seoul
151-747, Korea}

\email{kyunghoon.han@gmail.com}

\thanks{}

\subjclass[2000]{46L07, 47L07}

\keywords{matrix regular operator space, operator system}

\date{}

\dedicatory{}

\commby{}

%%% ----------------------------------------------------------------------

\begin{abstract}
We establish a relationship between Schreiner's matrix regular
operator space and Werner's (nonunital) operator system. For a
matrix ordered operator space $V$ with complete norm, we show that
$V$ is completely isomorphic and complete order isomorphic to a
matrix regular operator space if and only if both $V$ and its dual
space $V^*$ are (nonunital) operator systems.
\end{abstract}

%%% ----------------------------------------------------------------------
\maketitle
%%% ----------------------------------------------------------------------

\section{Introduction}

The theory of operator spaces has been developed as a
noncommutative counterpart of the theory of Banach spaces. The
order structures of classical Banach spaces have been studied
mostly under the Banach lattice framework. However, the most basic
examples of operator algebras such as $M_n (n \ge 2)$ do not
possess these lattice structures (consider $2 \times 2$ matrices
$0_2$ and $E_{1,2}+E_{2,1}$ and the family $\lambda
E_{1,1}+\lambda^{-1} E_{2,2}$ for $\lambda>0$). Hence, it is
natural to consider order structures that will work in a
noncommutative setting. In this paper, we focus on such order
structures of operator spaces.

Two of the several basics of operator space theory are Ruan's
representation theorem and the duality: every operator space can
be embedded into $B(H)$ completely isometrically and there is a
natural operator space structure on the Banach dual of an operator
space. From this standpoint, there are two definitions of the
order structures of operator spaces. Werner's (nonunital) operator
system corresponds to the representation and Schreiner's matrix
regular operator space to the duality. Usually, an operator system
means a unital involutive subspace of $B(H)$ or its abstract
characterization given by Choi and Effros \cite{CE}, but here we
follow Werner's terminology. In this paper, a (nonunital) operator
system means a matrix ordered operator space which is completely
isomorphic and complete order isomorphic to an involutive subspace
of $B(H)$ or its abstract characterization given by Werner
\cite{W}. For a matrix ordered operator space $V$ with complete
norm, $V$ is matrix regular if and only if its dual space $V^*$ is
matrix regular \cite{S2}.

The category of (nonunital) operator systems contains the class of
$C^*$-algebras and Haagerup's noncommutative $L_p$-spaces
\cite{H}. The category of matrix regular operator spaces contains
the class of $C^*$-algebras and their duals, preduals of von
Neumann algebras, and the Schatten class $\mathcal S_p$ \cite{S1}.

Karn proved that every matrix regular operator space is a
(nonunital) operator system \cite{K}. Since the dual space of a
matrix regular operator space is matrix regular, the dual space of
a matrix regular operator space is also a (nonunital) operator
system. Its converse would be reasonable in the completely
isomorphic context because Werner's (nonunital) operator system is
defined not in a completely isometric sense but in a completely
isomorphic sense. The purpose of this paper is to show that a
matrix ordered operator space $V$ with complete norm is completely
isomorphic and complete order isomorphic to a matrix regular
operator space if and only if both $V$ and its dual space $V^*$
are (nonunital) operator systems.

\section{Preliminaries}

Recall that a complex vector space $V$ is matrix ordered if
\begin{enumerate}
\item $V$ is a $*$-vector space (hence so is $M_n(V)$ for all $n
\ge 1$), \item each $M_n(V), n \ge 1$, is partially ordered by a
(not necessarily proper) cone $M_n(V)^+ \subset  M_n(V)_{sa}$ ,
and \item if $\alpha \in M_{m,n}$, then $\alpha^* M_m(V)^+ \alpha
\subset M_n(V)^+$.
\end{enumerate}
Here the positive cone need not be proper, in other words, it may
be the case that $M_n(V)^+ \cap - M_n(V)^+ \ne \{0\}$. The reason
to exclude the proper condition is due to the fact that the dual
cone of a proper cone need not be proper.

An operator space $V$ is called a matrix ordered operator space
iff $V$ is a matrix ordered vector space and for every $n \in
\mathbb N$,
\begin{enumerate}
\item the $*$-operation is an isometry on $M_n(V)$, and \item the
cones $M_n(V)^+$ are closed.
\end{enumerate}

For a matrix ordered operator space $V$ and its dual space $V^*$,
the positive cone on $M_n(V^*)$ for each $n \in \mathbb N$ is
defined by
$$M_n(V^*)^+=CB(V, M_n) \cap CP(V, M_n).$$ Then the operator space dual
$V^*$ with this positive cone is a matrix ordered operator space
\cite[Corollary 3.2]{S2}.

For a matrix ordered operator space $V$ with complete norm, we say
that $V$ is a matrix regular operator space if for each $n \in
\mathbb N$ and for all $v \in M_n(V)_{sa}$
\begin{enumerate}
\item $u \in M_n(V)^+$ and $-u \le v \le u$ imply that $\|v\|_n
\le \|u\|_n$, and \item $\|v\|_n<1$ implies that there exists $u
\in M_n(V)^+$ such that $\|u\|_n<1$ and $-u \le v \le u$.
\end{enumerate}
Due to condition (1), it is easily seen that the positive cone of
a matrix regular operator space is always proper. A matrix regular
operator space can be described in another way. A matrix ordered
operator space $V$ with complete norm is matrix regular if and
only if the following condition holds: for all $x \in M_n(V),
\|x\|_n<1$ if and only if there exist $a, d \in M_n(V)^+,
\|a\|_n<1$ and $\|d\|_n<1$, such that $\begin{pmatrix} a & x
\\ x^* & d \end{pmatrix} \in M_{2n}(V)^+$ \cite[Theorem 3.4]{S2}.

The class of matrix regular operator spaces has a nice duality
property. Let $V$ be a matrix ordered operator space with complete
norm. Schreiner showed that $V$ is matrix regular if and only if
its dual space $V^*$ is matrix regular \cite[Corollary 4.7,
Theorem 4.10]{S2}.

Let $X$ be a matrix ordered operator space with a proper positive
cone. For $x \in M_n(X)$, the modified numerical radius is defined
by $$\nu_X(x) = \sup \{|\varphi(\begin{pmatrix} 0 & x \\
x^* & 0 \end{pmatrix})| : \varphi \in M_{2n}(X)^*_{1,+} \}.$$ We
call a matrix ordered operator space with a proper positive cone
an operator system iff there is a $k>0$ such that for all $n \in
\mathbb N$ and $x \in M_n(X)$,
$$\|x\|_n \le k \nu_X(x).$$ Since we always have the inequality
$\nu_X(x) \le \|x\|_n$, we can say that an operator system is a
matrix ordered operator space such that the operator space norm
and the modified numerical radius are equivalent uniformly for all
$n \in \mathbb N$.

Werner showed that $X$ is an operator system if and only if there
is a complete order isomorphism $\Phi$ from $X$ onto an involutive
subspace of $B(H)$, which is a complete topological
onto-isomorphism \cite[Theorem 4.15]{W}. Hence, the operator
system is an abstract characterization of the involutive subspace
of $B(H)$ in a completely isomorphic and complete order isomorphic
sense.

\section{Matrix regular operator space and operator system}

Karn showed that every matrix regular operator space can be
embedded into $B(H)$ 2-completely isomorphically and complete
order isomorphically \cite{K}. Here we give another proof of the
result that is more fitting to Werner's axiomatic framework. This
idea also appears in the recent preprint \cite[Theorem 2.4]{KPTT}.

\begin{thm} \label{main}
Every matrix regular operator space is an operator system with a
dominating constant $2$.
\end{thm}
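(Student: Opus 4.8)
The plan is to verify Werner's defining inequality $\|x\|_n\le 2\,\nu_V(x)$ directly, for every $n$ and every $x\in M_n(V)$ (note the positive cone of a matrix regular operator space is already known to be proper, so $\nu_V$ is defined). The engine is the self-duality of the class of matrix regular operator spaces, applied not to $V$ itself but to the matrix amplification $W:=M_{2n}(V)$.

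First I would record two routine facts. (a) With cones $M_k(W)^+:=M_{2nk}(V)^+$ and the identification $M_k(W)=M_{2nk}(V)$, the space $W$ is again a matrix regular operator space with complete norm, since conditions (1) and (2) in the definition are phrased uniformly over all matrix levels; hence, by Schreiner's duality theorem quoted above, the operator space dual $W^*$ is matrix regular as well. At the first matrix level $(W^*)^+=CB(W,\mathbb C)\cap CP(W,\mathbb C)$, and any completely positive functional on $W$ is in particular a positive functional on $M_{2n}(V)$; so every element of $(W^*)^+$ lying in the closed unit ball of $W^*$ belongs to $M_{2n}(V)^*_{1,+}$. (b) Writing $h:=\begin{pmatrix} 0 & x\\ x^* & 0\end{pmatrix}\in M_{2n}(V)=W$, a self-adjoint element, one has $\|x\|_n\le\|h\|_W$: multiplying $h$ on the left and right by the scalar contractions $\begin{pmatrix} 0 & I_n\\ I_n & 0\end{pmatrix}$ and $\begin{pmatrix} I_n & 0\\ 0 & 0\end{pmatrix}$ in $M_{2n}$ yields $\begin{pmatrix} x^* & 0\\ 0 & 0\end{pmatrix}$, whose norm is $\|x\|_n$. (In fact $\|h\|_W=\|x\|_n$, but only this inequality is needed.)

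Next I would choose the functional to be decomposed. By Hahn--Banach there is $\psi_0\in W^*$ with $\|\psi_0\|=1$ and, after multiplying by a scalar of modulus one, $\psi_0(h)=\|h\|_W\ge 0$. Its symmetrization $\psi:=\tfrac12(\psi_0+\psi_0^*)$, where $\psi_0^*(w):=\overline{\psi_0(w^*)}$, is a self-adjoint element of $W^*$ with $\|\psi\|\le 1$, and, using $h=h^*$ together with $\psi_0(h)\in\mathbb R$, one still has $\psi(h)=\|h\|_W\ge\|x\|_n$. Now fix $\varepsilon\in(0,1)$; since $\|(1-\varepsilon)\psi\|<1$ and $(1-\varepsilon)\psi$ is self-adjoint, matrix regularity of $W^*$ (condition (2) at the first level) provides $\varphi\in(W^*)^+$ with $\|\varphi\|<1$ and $-\varphi\le(1-\varepsilon)\psi\le\varphi$ in $W^*$. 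Setting $\varphi_\pm:=\tfrac12\bigl(\varphi\pm(1-\varepsilon)\psi\bigr)$, each $\varphi_\pm$ is a positive element of $W^*$ of norm at most $1$, hence $\varphi_\pm\in M_{2n}(V)^*_{1,+}$ by fact (a), while $\varphi_+(h)-\varphi_-(h)=(1-\varepsilon)\psi(h)\ge(1-\varepsilon)\|x\|_n$. Since $\varphi_\pm(h)$ are real, it follows that $\max\{|\varphi_+(h)|,|\varphi_-(h)|\}\ge\tfrac12(1-\varepsilon)\|x\|_n$, so $\nu_V(x)\ge\tfrac12(1-\varepsilon)\|x\|_n$; letting $\varepsilon\to 0$ gives $\|x\|_n\le 2\,\nu_V(x)$.

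The genuine content is the single decomposition step: the self-adjoint functional $\psi$ detecting $\|h\|_W$ is split as $\varphi_+-\varphi_-$ with $\varphi_\pm$ positive and of norm at most $1$, and this is exactly where matrix regularity of the dual $W^*$ is used. The factor $2$ is forced here — from $\varphi_+(h)-\varphi_-(h)=\psi(h)=\|h\|_W\ge\|x\|_n$ and $\|\varphi_\pm\|\le 1$ one can only conclude that one of $|\varphi_\pm(h)|$ is at least $\tfrac12\|x\|_n$ — which is consistent with the known optimality of the constant $2$ in Karn's embedding theorem. The only points needing a little care, rather than being hard, are the bookkeeping in fact (a) (that the matrix regular structure and Schreiner's dual cones behave well under the amplification $V\mapsto M_{2n}(V)$, and that completely positive functionals are positive functionals) and the $\varepsilon$-perturbation, which is needed only because Hahn--Banach delivers a norm-one functional whereas condition (2) of matrix regularity is stated with a strict inequality.
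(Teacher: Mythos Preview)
Your argument is correct. It is, however, a genuinely different route from the paper's. The paper first reduces to proving the inequality for $V^*$ (using that $V\hookrightarrow V^{**}$ is a completely isometric complete order embedding, together with Schreiner's duality), and then, given $F\in M_n(V^*)$, invokes the $2\times 2$-block characterisation of matrix regularity of $V$ to produce, for each $x\in M_n(V)$ with $\|x\|<1$, an explicit positive contractive functional $\varphi_{x,\xi}$ on $M_{2n}(V^*)$ via a concrete compression formula. You instead stay on $V$, take a Hahn--Banach functional $\psi$ on $W=M_{2n}(V)$ attaining $\|h\|$, and use condition~(2) of matrix regularity of $W^*$ (from Schreiner's duality applied to $W$) to Jordan-decompose $\psi$ as $\varphi_+-\varphi_-$ with $\varphi_\pm$ positive and contractive. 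Both proofs ultimately rest on Schreiner's self-duality theorem, but the paper applies the $2\times 2$ characterisation on the \emph{predual} side to manufacture positivity, whereas you apply the decomposition property on the \emph{dual} side. Your version is slightly more economical in that it avoids the bidual embedding theorem, and is conceptually cleaner (Hahn--Banach plus a Jordan decomposition); the paper's version has the advantage of exhibiting the witnessing positive functional by an explicit formula.
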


\begin{proof}
Since the dual space of a matrix regular operator space is matrix
regular and the canonical inclusion map from a matrix ordered
operator space into its bidual is a completely isometric complete
order isomorphism \cite[Theorem 4.9]{S2}, it is sufficient to show
that the dual space of a matrix regular operator space is an
operator system.

Suppose that $V$ is a matrix regular operator space. We choose an
element $F = [f_{ij}]$ in $M_n(V^*)$. Its norm can be written as
$$\begin{aligned} \|F\|_{M_n(V^*)} & = \|F\|_{CB(V,M_n)} \\ & = \sup
\{ \|F_n(x)\|_{M_{n^2}} : x \in M_n(V)_{\|\cdot\|<1} \} \\ & =
\sup \{ \| \begin{pmatrix} 0 & F_n(x) \\ F_n(x)^* & 0
\end{pmatrix} \|_{M_{2n^2}} : x \in M_n (V)_{\|\cdot\|<1} \} \\ &
= \sup \{ | \langle \begin{pmatrix} 0 & F_n(x) \\ F_n(x)^* & 0
\end{pmatrix} \xi | \xi \rangle| : x \in M_n(V)_{\|\cdot\|<1}, \xi
\in (\ell^2_{2n^2})_1 \}. \end{aligned}$$ We choose elements $x$
in $M_n(V)_{\|\cdot \|<1}$ and $\xi$ in $(\ell^2_{2n^2})_1$. By
\cite[Theorem 3.4]{S2}, there exist $a,d$ in $M_n(V)$ such that
$\|a\|_n<1, \|d\|_n<1$ and $\begin{pmatrix} a & x \\ x^* & d
\end{pmatrix} \in M_{2n}(V)^+$. Then we have $\|\begin{pmatrix} a & x \\ x^* & d
\end{pmatrix}\|_{2n} < 2$. We define a linear functional
$\varphi_{x,\xi} : M_{2n}(V^*) \to \mathbb C$ by $$\varphi_{x,\xi}
(G) = {1 \over 2} \langle
\begin{pmatrix} I_{n^2}&0&0&0 \\ 0&0&0&I_{n^2} \end{pmatrix} G_{2n}
(\begin{pmatrix} a&x \\ x^*&d \end{pmatrix})
\begin{pmatrix} I_{n^2}&0\\0&0\\0&0\\0&I_{n^2}
\end{pmatrix} \xi | \xi \rangle,\quad G \in M_{2n}(V^*)=CB(V, M_{2n})$$ Then $\varphi_{x,\xi}$
is a positive contractive functional. Putting $G=\begin{pmatrix}
0&F \\ F^*&0 \end{pmatrix}$, we get
$$\begin{aligned} \varphi_{x,\xi}(\begin{pmatrix} 0&F \\ F^*&0 \end{pmatrix}) & = {1
\over 2} \langle \begin{pmatrix} I_{n^2}&0&0&0 \\ 0&0&0&I_{n^2}
\end{pmatrix} \begin{pmatrix} 0&0&F_n(a)&F_n(x) \\
0&0&F_n(x^*)&F_n(d) \\ F_n^*(a)&F_n^*(x)&0&0 \\
F_n(x)^*&F_n^*(z)&0&0 \end{pmatrix}
\begin{pmatrix}I_{n^2}&0\\0&0\\0&0\\0&I_{n^2} \end{pmatrix} \xi | \xi \rangle \\ & = {1 \over 2} \langle \begin{pmatrix} 0 &
F_n(x) \\ F_n(x)^* & 0 \end{pmatrix} \xi | \xi \rangle.
\end{aligned}$$
It follows that $$\begin{aligned} \|F\|_{M_n(V^*)} & = 2 \sup \{
|\varphi_{x,\xi}(\begin{pmatrix} 0 & F \\ F^* & 0 \end{pmatrix})|
: x \in M_n(V)_{\|\cdot\|<1}, \xi \in (\ell^2_{2n})_1 \} \\ & \le
2 \nu_{V^*}(F),\end{aligned},$$ where $\nu_{V^*}(F)$ denotes the
modified numerical radius of $F$.
\end{proof}

In general, the embedding cannot be chosen completely
isometrically and  complete order isomorphically as can be seen
from the two-dimensional $L^1$-space $\ell^1_2$ \cite[Proposition
1.1]{BKNW}. In the case of the Schatten class $\mathcal S_p$, the
constant can be chosen to be $2^{1 \over p}$ \cite{H}.

The direct converse of Theorem \ref{main} is false. If we consider
the operator system $$\{ \begin{pmatrix} 0 & \alpha
\\ \beta & 0 \end{pmatrix} \in M_{2n} : \alpha, \beta \in M_{n}
\},$$ then its positive cone is trivial, thus the second condition
on matrix regularity cannot be satisfied for any operator system
complete order isomorphic to the above one. Because the dual space
of a matrix regular operator space is also matrix regular, the
matrix regularity of $V$ implies that both $V$ and its dual space
$V^*$ are operator systems. Our next goal is to prove the converse
in a completely isomorphic and complete order isomorphic sense.
Informally, the first and the second conditions on matrix
regularity imply that the positive cone is small and large,
respectively, in some sense. In other words, we can say that the
positive cone of a matrix regular operator space is just the right
size. The definition of an operator system means the dual cone is
large enough, or equivalently, that the positive cone of an
operator system is small enough. As we have just seen, the
positive cone of an operator system may be trivial. We see from
these informal observations that it is natural to consider the
problem set forth as the goal of the present paper.

\begin{lem}\label{regular}
Suppose that $V$ is a matrix ordered operator space with complete
norm satisfying the following two conditions:
\begin{enumerate} \item for all $x, y \in M_n(V)_{sa}, -y \le x \le y$ implies
$\|x\|_n \le \|y\|_n$. \item for all $x \in M_n(V)_{sa}$ with
$\|x\|<1$, there exist $a, d \in M_n(V)^+$ such that $\|a\|_n,
\|d\|_n<K$ and $\begin{pmatrix} a&x \\ x^* &d \end{pmatrix} \in
M_{2n}(V)^+$.
\end{enumerate}
Then $V$ is $K$-completely isomorphic and complete order
isomorphic to a matrix regular operator space.
\end{lem}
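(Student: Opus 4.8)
\emph{Proof strategy.} The plan is to renorm $V$: keep the underlying involutive vector space and every matricial cone $M_n(V)^+$ exactly as they are, and replace the operator space norm $\|\cdot\|_n$ on $M_n(V)$ by a new norm $|||\cdot|||_n$ for which matrix regularity holds on the nose. If the new norm satisfies $\|x\|_n\le|||x|||_n\le K\|x\|_n$ for all $x$, then the identity map $V\to(V,|||\cdot|||)$ has completely bounded norm at most $K$ and completely bounded inverse of norm at most $1$, and since it preserves the $*$-operation and the cones it is the desired $K$-complete isomorphism and complete order isomorphism.

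The renorming is dictated by the first matrix-regularity condition, which in a matrix regular space forces $\|w\|_n=\inf\{\|u\|_n:u\in M_n(V)^+,\ -u\le w\le u\}$ for self-adjoint $w$. So for self-adjoint $w\in M_m(V)_{sa}$ I let $q_m(w)$ denote this infimum, and for an arbitrary $x\in M_n(V)$ I set $|||x|||_n:=q_{2n}(\begin{pmatrix}0&x\\x^*&0\end{pmatrix})$. By condition (2) of the hypothesis, applied to a rescaling of $w$ and taking $u=\tfrac12(a+d)$, the defining set is nonempty and $q_m(w)\le K\|w\|_m$; by condition (1) of the hypothesis, $\|w\|_m\le q_m(w)$. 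Combining these with the identity $\|\begin{pmatrix}0&x\\x^*&0\end{pmatrix}\|_{2n}=\|x\|_n$ — which holds in any matrix ordered operator space, since under a completely isometric embedding $V\subseteq B(H)$ the left-hand side is the norm of an off-diagonal operator matrix in $M_2(B(H^n))$ and hence equals $\max(\|x\|_n,\|x^*\|_n)=\|x\|_n$, using that $*$ is isometric on $M_n(V)$ — one obtains $\|x\|_n\le|||x|||_n\le K\|x\|_n$ for every $x$.

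It remains to verify that $|||\cdot|||$ is an operator space norm and that $(V,|||\cdot|||)$ is matrix regular; the estimate just proved then also shows $|||\cdot|||$ is complete (being equivalent to $\|\cdot\|$), that the cones stay closed, and, via the conjugation-invariance of $q$ below, that $*$ stays $|||\cdot|||$-isometric, so $(V,|||\cdot|||)$ is genuinely a matrix regular operator space. Ruan's axioms follow from the factorization $\begin{pmatrix}0&\alpha x\beta\\(\alpha x\beta)^*&0\end{pmatrix}=\begin{pmatrix}\alpha&0\\0&\beta^*\end{pmatrix}\begin{pmatrix}0&x\\x^*&0\end{pmatrix}\begin{pmatrix}\beta&0\\0&\alpha^*\end{pmatrix}$ (after normalizing $\alpha,\beta$ to norm one and using homogeneity) and the unitary shuffle identifying $\begin{pmatrix}0&x\oplus y\\(x\oplus y)^*&0\end{pmatrix}$ with $\begin{pmatrix}0&x\\x^*&0\end{pmatrix}\oplus\begin{pmatrix}0&y\\y^*&0\end{pmatrix}$, together with the elementary facts that $q_m$ is subadditive, satisfies $q(\gamma^*w\gamma)\le\|\gamma\|^2 q(w)$, and obeys $q(w\oplus w')=\max(q(w),q(w'))$ (the last by compressing to diagonal corners). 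For matrix regularity one first checks $q_{2n}(\begin{pmatrix}0&v\\v&0\end{pmatrix})=q_n(v)$ for self-adjoint $v$: one inequality because $-u\le v\le u$ makes $\begin{pmatrix}u&v\\v&u\end{pmatrix}$ positive, the other by compressing with $\tfrac{1}{\sqrt2}\begin{pmatrix}I\\I\end{pmatrix}$. Hence $|||v|||_n=q_n(v)$ for self-adjoint $v$, and $|||u|||_n=\|u\|_n$ for $u\in M_n(V)^+$ by condition (1); matrix-regularity condition (1) for $(V,|||\cdot|||)$ is then immediate from the definition of $q_n$, and condition (2) holds because $q_n(v)$ is approached by admissible $u$, each of which satisfies $|||u|||_n=\|u\|_n$.

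The one delicate point is the constant. Feeding $x$ itself into Schreiner's criterion via $\begin{pmatrix}a&x\\x^*&d\end{pmatrix}$ only controls the self-adjoint perturbations $\mathrm{Re}(e^{i\theta}x)$ and costs a factor of $2$ on non-self-adjoint elements; passing instead through the self-adjoint dilation $\begin{pmatrix}0&x\\x^*&0\end{pmatrix}$ together with the norm identity above is exactly what keeps the constant equal to $K$.
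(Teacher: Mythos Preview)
Your renorming argument is correct and is essentially the paper's approach: the paper sets
\[
\|x\|_{reg}=\inf\Bigl\{\max(\|a\|_n,\|d\|_n):\begin{pmatrix}a&x\\x^*&d\end{pmatrix}\in M_{2n}(V)^+\Bigr\},
\]
which a diagonal--averaging trick (conjugate by $\operatorname{diag}(I_n,-I_n)$ and average) shows coincides with your $|||x|||_n$, and then checks matrix regularity via Schreiner's $2\times2$ criterion rather than directly as you do. One small slip: in your displayed factorization for $\alpha x\beta$ the right-hand factor should be $\begin{pmatrix}\alpha^*&0\\0&\beta\end{pmatrix}$, the adjoint of the left-hand factor, so that the expression really has the form $\gamma^*W\gamma$ to which your conjugation estimate $q(\gamma^*w\gamma)\le\|\gamma\|^2q(w)$ applies; as written it evaluates to $\begin{pmatrix}0&\alpha x\alpha^*\\\beta^*x^*\beta&0\end{pmatrix}$.
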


\begin{proof}
We first define
$$\| x \|_{reg} = \inf \{ \max \{ \|a\|_n, \|d\|_n \} : \begin{pmatrix} a & x \\
x^* & d \end{pmatrix} \in M_{2n}(V)^+ \}, \qquad x \in M_n(V).$$
Note that this definition is similar to the norm $\|\cdot\|_{dec}$
of a decomposable map \cite{Ha}. The set which we take an infimum
over is not empty and we have $\|x\|_{reg} \le K \|x\|_n$.
Multiplying both sides by the scalar matrix $\begin{pmatrix} 1&0
\\ 0&-1
\end{pmatrix}$, we see that
$$\begin{pmatrix} a & x \\ x^* & d \end{pmatrix} \in M_{2n}^+ \quad \text{if
and only if} \quad -\begin{pmatrix} a&0 \\ 0&d \end{pmatrix} \le
\begin{pmatrix} 0&x \\ x^* &0 \end{pmatrix} \le \begin{pmatrix} a&0 \\ 0&d \end{pmatrix}.$$
The inequality $\|x\|_n \le \|x\|_{reg}$ follows from condition (1). We choose elements $\begin{pmatrix} a_1 & x \\
x^* & d_1 \end{pmatrix}$ and $\begin{pmatrix} a_2 & y \\ y^* & d_2
\end{pmatrix}$ in $M_{2n}(V)^+$. Since $\begin{pmatrix} a_1 + a_2 & x+y \\
(x+y)^* & d_1+ d_2 \end{pmatrix}$ belongs to $M_{2n}(V)^+$, we
have
$$\begin{aligned} \|x+y \|_{reg} & \le \max \{ \|a_1+a_2\|_n,
\|d_1+d_2\|_n\} \\ & =  \| \begin{pmatrix} a_1 + a_2 & 0 \\ 0 &
d_1+d_2 \end{pmatrix}\|_{2n} \\ & \le \| \begin{pmatrix} a_1 & 0 \\
0 & d_1 \end{pmatrix} \|_{2n} + \|\begin{pmatrix} a_2 & 0 \\ 0 &
d_2 \end{pmatrix}\|_{2n} \\ & = \max \{ \|a_1\|_n, \|d_1\|_n \} +
\max \{ \| a_2 \|_n, \|d_2\|_n \}.
\end{aligned}$$
It follows that $\|x+y\|_{reg} \le \|x\|_{reg} + \|y\|_{reg}$. For
$\lambda = e^{it}|\lambda| \in \mathbb C$, we have
$$\begin{pmatrix} |\lambda|a & \lambda x \\ (\lambda x)^* &
|\lambda| b \end{pmatrix} = \begin{pmatrix} e^{it} |\lambda|^{1
\over 2} & 0 \\ 0 & |\lambda|^{1 \over 2} \end{pmatrix}
\begin{pmatrix} a & x \\ x^* & d \end{pmatrix} \begin{pmatrix} e^{-it} |\lambda|^{1
\over 2} & 0 \\ 0 & |\lambda|^{1 \over 2} \end{pmatrix} \in
M_{2n}(V)^+.$$ It follows that $\|\lambda x\|_{reg} = |\lambda|
\|x\|_{reg}$. Because we have $\|x\|_n \le \|x\|_{reg}$ for all $x
\in M_n(V)$, $\|x\|_{reg}=0$ implies $x=0$. Hence, $\|\cdot
\|_{reg}$ is a norm on $M_n(V)$ for each $n \in \mathbb N$.

Next let us show that $\|\alpha x \beta \|_{reg} \le \|\alpha \|
\|x\|_{reg} \|\beta\|$ for $x \in M_m(V), \alpha \in M_{n,m},
\beta \in M_{m,n}$. To this end, we may assume that $\|\alpha\| =
\|\beta\|$. For $\begin{pmatrix} a & x \\ x^* & d
\end{pmatrix} \in M_{2m}(V)^+$, we have $$\begin{pmatrix} \alpha a
\alpha^* & \alpha x \beta \\ (\alpha x \beta)^* & \beta^* d \beta
\end{pmatrix} = \begin{pmatrix} \alpha & 0  \\ 0 & \beta^* \end{pmatrix} \begin{pmatrix} a & x
\\ x^* & d \end{pmatrix} \begin{pmatrix} \alpha^* & 0 \\ 0 & \beta \end{pmatrix} \in M_{2n}(V)^+.$$
It follows that $$\begin{aligned} \|\alpha x \beta \|_{reg} & \le
\max \{ \|\alpha a \alpha^* \|_n, \|\beta^* d\beta \|_n \} \\ &
\le \|\alpha\| \|\beta\| \max \{ \|a\|_m, \|d\|_m
\},\end{aligned}$$ thus $\|\alpha x \beta \|_{reg} \le \|\alpha\|
\|\beta\| \|x\|_{reg}$. Suppose that $$\begin{pmatrix} a_1 & x \\
x^* & d_1
\end{pmatrix} \in M_{2m}(V)^+ \quad \text{and} \quad \begin{pmatrix} a_2 & y \\ y^* & d_2
\end{pmatrix} \in M_{2n}(V)^+$$ with
$\|a_1\|_m,\|d_1\|_m<\|x\|_{reg}+\varepsilon$ and
$\|a_2\|_n,\|d_2\|_n<\|y\|_{reg}+\varepsilon.$ Then we have
$$\begin{pmatrix} a_1 & 0 & x & 0 \\ 0 & a_2 & 0 & y \\ x^* & 0 &
d_1 & 0\\ 0 & y^* & 0 & d_2 \end{pmatrix} = \begin{pmatrix}
1&0&0&0 \\ 0&0&1&0 \\ 0&1&0&0 \\ 0&0&0&1 \end{pmatrix}
\begin{pmatrix} a_1 & x & 0 & 0 \\ x^* & d_1 & 0 & 0 \\ 0 & 0& a_2 & y \\ 0 & 0 & y^* & d_2
\end{pmatrix} \begin{pmatrix} 1&0&0&0 \\ 0&0&1&0 \\ 0&1&0&0 \\ 0&0&0&1 \end{pmatrix} \in
M_{2(m+n)}(V)^+.$$ It follows that $$\| x \oplus y \|_{reg} \le
\max \{ \|a_1\|_m, \|a_2\|_n, \|d_1\|_m, \|d_2\|_n \} < \max \{
\|x\|_{reg}, \|y\|_{reg} \} + \varepsilon.$$ Hence, $(V,
\|\cdot\|_{reg})$ is an operator space.

For $\begin{pmatrix} a & x \\ x^* & d \end{pmatrix} \in
M_{2n}(V)^+$, we have $$\begin{pmatrix} d & x^* \\ x & a
\end{pmatrix} = \begin{pmatrix} 0 & 1 \\ 1 & 0 \end{pmatrix}
\begin{pmatrix} a & x \\ x^* & d \end{pmatrix} \begin{pmatrix} 0 & 1 \\ 1 & 0
\end{pmatrix} \in M_{2n}(V)^+,$$ thus $\|x\|_{reg}=\|x^*\|_{reg}$. Since the identity map
$id : (V, \|\cdot\|) \to (V, \|\cdot\|_{reg})$ is a $K$-complete
isomorphism, the operator space $(V, \|\cdot\|_{reg})$ is complete
and the positive cone $M_n(V)^+$ is closed with respect to the
norm $\|\cdot\|_{reg}$. For $a \in M_n(V)^+$, we have
$$\begin{pmatrix} a & a \\ a & a
\end{pmatrix} = \begin{pmatrix} 1 \\ 1 \end{pmatrix} a
\begin{pmatrix} 1 & 1 \end{pmatrix} \in M_{2n}(V)^+,$$ thus
$\|a\|_{reg}=\|a\|_n$. If an element $x$ belongs to $M_n(V)$ with
$\|x\|_{reg}<1$, then there exist elements $a$ and $d$ in
$M_n(V)^+$ such that $\|a\|_n, \|d\|_n<1$ and $\begin{pmatrix} a &
x \\ x^* & d \end{pmatrix} \in M_{2n}(V)^+$. Since $a$ and $d$ are
positive, we have $\|a\|_{reg}, \|d\|_{reg} <1$. The converse is
obvious. By \cite[Theorem 3.4]{S2}, $(V, \|\cdot\|_{reg},
\{M_n(V)^+\}_{n \in \mathbb N}$) is a matrix regular operator
space. The identity map $id : (V, \|\cdot\|,\{M_n(V)^+\}_{n \in
\mathbb N}) \to (V, \|\cdot\|_{reg}, \{M_n(V)^+\}_{n \in \mathbb
N})$ is a $K$-completely isomorphic complete order isomorphism.
\end{proof}

\begin{lem}\label{dual}
Suppose that a matrix ordered operator space $V$ and its dual
space $V^*$ are operator systems. Then the dual space $V^*$ is
completely isomorphic and complete order isomorphic to a matrix
regular operator space.
\end{lem}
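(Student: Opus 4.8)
The plan is to reduce to Lemma \ref{regular} by invoking Werner's representation theorem twice, once for $V$ and once for $V^*$. Since $V$ is an operator system, fix a complete isomorphism and complete order isomorphism $\Phi\colon V\to W$ onto an involutive subspace $W\subseteq B(H)$, so that the matrix cones of $W$ are those inherited from $B(H)$. I would use this to check condition (2) of Lemma \ref{regular} for $(V^*,\|\cdot\|_{cb})$: given $\phi\in M_n(V^*)_{sa}=CB(V,M_n)$ with $\|\phi\|_{cb}<1$, carry it over to $\phi\circ\Phi^{-1}\colon W\to M_n$ and extend the latter, by the (complete) injectivity of $M_n$, to a completely bounded map $\tilde\phi\colon B(H)\to M_n$ of the same completely bounded norm. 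Since $B(H)^*$ is the dual of a $C^*$-algebra it is matrix regular, so by \cite[Theorem 3.4]{S2} there are completely positive maps $\tilde a,\tilde d\colon B(H)\to M_n$ with controlled completely bounded norms and $\begin{pmatrix}\tilde a&\tilde\phi\\ \tilde\phi^*&\tilde d\end{pmatrix}\in M_{2n}(B(H)^*)^+$ (equivalently, take the Wittstock factorization $\tilde\phi=S\pi(\cdot)T$ and set $\tilde a=S\pi(\cdot)S^*$, $\tilde d=T^*\pi(\cdot)T$). Restricting $\tilde a,\tilde d$ to $W$ and composing with $\Phi$ then produces $a,d\in M_n(V^*)^+$ with $\begin{pmatrix}a&\phi\\ \phi^*&d\end{pmatrix}\in M_{2n}(V^*)^+$ and $\|a\|_{cb},\|d\|_{cb}<K$, where $K$ depends only on the isomorphism constant of $\Phi$; this is condition (2).

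Since $V^*$ is also an operator system, fix a complete isomorphism and complete order isomorphism $\Psi\colon V^*\to Z$ onto an involutive subspace $Z\subseteq B(\mathcal K)$ and equip each $M_n(V^*)$ with the transported norm $\|\phi\|_Z:=\|\Psi_n(\phi)\|_{M_n(B(\mathcal K))}$. This norm is equivalent to $\|\cdot\|_{cb}$, the matrix cones are unchanged, and $(V^*,\|\cdot\|_Z,\{M_n(V^*)^+\})$ is a matrix ordered operator space with complete norm which, via $\Psi$, sits isometrically and complete order isomorphically inside $B(\mathcal K)$. For this norm condition (1) of Lemma \ref{regular} holds exactly: if $-\psi\le\phi\le\psi$ in $M_n(V^*)_{sa}$, then applying $\Psi_n$ gives $-\Psi_n(\psi)\le\Psi_n(\phi)\le\Psi_n(\psi)$ in the $C^*$-algebra $M_n(B(\mathcal K))$ with $\Psi_n(\psi)\ge0$, whence $\|\Psi_n(\phi)\|\le\|\Psi_n(\psi)\|$, that is $\|\phi\|_Z\le\|\psi\|_Z$. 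Condition (2) also persists for $\|\cdot\|_Z$, with a constant $K'$ absorbing the equivalence constants between $\|\cdot\|_Z$ and $\|\cdot\|_{cb}$.

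Finally, Lemma \ref{regular} applied to $(V^*,\|\cdot\|_Z,\{M_n(V^*)^+\})$ shows it is $K'$-completely isomorphic and complete order isomorphic to a matrix regular operator space; composing with the identity map $(V^*,\|\cdot\|_{cb})\to(V^*,\|\cdot\|_Z)$, which is a complete isomorphism and complete order isomorphism, gives the statement for $V^*$. I expect the delicate point to be condition (2): one must ensure that the extension off $W$, the decomposition coming from matrix regularity of $B(H)^*$ (or from Wittstock's factorization), the restriction back to $W$, and the pull-back along $\Phi$ all cooperate so that the resulting $a,d$ genuinely lie in $M_n(V^*)^+=CB(V,M_n)\cap CP(V,M_n)$ with uniformly controlled completely bounded norm. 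Keeping the two appeals to Werner's theorem separate --- one used to make the cone of $V^*$ large enough for (2), the other to make it small enough via an exact version of (1) --- is the other thing to be careful about.
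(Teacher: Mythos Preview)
Your proposal is correct and follows the same architecture as the paper: use Werner's representation of $V^*$ to secure condition~(1) of Lemma~\ref{regular} (after passing to the transported norm, which you make more explicit than the paper does), use Werner's representation of $V$ to secure condition~(2), and then invoke Lemma~\ref{regular}.

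The only difference is in how condition~(2) is obtained. The paper builds the unital operator system
\[
X=\Bigl\{\begin{pmatrix}\lambda I_H & x\\ y & \mu I_H\end{pmatrix}: \lambda,\mu\in\mathbb{C},\ x,y\in W\Bigr\}\subset B(H^2),
\]
applies Paulsen's off-diagonal lemma \cite[Lemma~8.1]{P} to turn $F\circ\Phi$ into a unital completely positive map $X\to M_{2n}$, extends by Arveson to $M_2(W)+\mathbb{C}I_H\oplus\mathbb{C}I_H$, and reads off the corner maps $\varphi_1,\varphi_2$ by precomposing with $x\mapsto\begin{pmatrix}x&x\\x&x\end{pmatrix}$. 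You instead extend $\phi\circ\Phi^{-1}$ to all of $B(H)$ by injectivity of $M_n$ and then appeal to Wittstock's factorization (equivalently, matrix regularity of $B(H)^*$) to produce $\tilde a,\tilde d$. Since Wittstock's theorem is itself proved via the Paulsen trick plus Arveson, the two arguments are the same machinery packaged differently; yours is a bit more modular, the paper's a bit more self-contained.
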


\begin{proof}
By looking at the dual space $V^*$ as a $*$-subspace of $B(K)$, we
can say that condition (1) of Lemma \ref{regular} is satisfied.
Suppose that $W$ is a $*$-subspace of $B(H)$ and $\Phi : W \to V$
is a completely isomorphic complete order isomorphism with
$\|\Phi\|_{cb}\le 1$. We put $$X = \{ \begin{pmatrix} \lambda I_H
& x \\ y & \mu I_H
\end{pmatrix} : \lambda, \mu \in \mathbb C, x,y \in W \}.$$
Then $X$ is a unital operator system in $B(H^2)$. We take an
element $F$ in $M_n(V^*)$ with $\|F\|_{M_n(V^*)}<1$. By
\cite[Lemma 8.1]{P}, the linear map $\varphi : X \to M_{2n}$
defined by $$\varphi (\begin{pmatrix} \lambda I_H& x \\ y & \mu
I_H \end{pmatrix}) = \begin{pmatrix} \lambda I_n & F \circ \Phi (x) \\
F^* \circ \Phi(y) & \mu I_n \end{pmatrix}$$ is a unital completely
positive map. By Arveson's extension theorem \cite{A}, $\varphi$
has a unital completely positive extension $\psi : M_2(W)+ \mathbb
C I_H \oplus \mathbb C I_H \to M_{2n}.$ The linear map $$\theta :
x \in W \mapsto \begin{pmatrix} x&x \\ x&x \end{pmatrix} =
\begin{pmatrix} 1\\1 \end{pmatrix} x \begin{pmatrix} 1&1
\end{pmatrix} \in M_2(W)$$ is completely positive. We write
$$\psi \circ \theta = \begin{pmatrix} \varphi_1 & F \circ \Phi \\ F^*
\circ \Phi & \varphi_2 \end{pmatrix} \in M_{2n}(W^*)^+,\qquad
\|\varphi_1\|_{M_n(W^*)} \le 1, \|\varphi_2\|_{M_n(W^*)} \le 1.$$
Then we have $$\begin{pmatrix} \varphi_1 \circ \Phi^{-1} & F \\
F^* & \varphi_2 \circ \Phi^{-1} \end{pmatrix} \in
M_{2n}(V^*)^+\quad \text{and} \quad \|\varphi_1 \circ
\Phi^{-1}\|_{cb}, \|\varphi_2 \circ \Phi^{-1} \|_{cb} \le
\|\Phi^{-1}\|_{cb}.$$ By Lemma \ref{regular}, we conclude that the
dual space $V^*$ is completely isomorphic and complete order
isomorphic to a matrix regular operator space.
\end{proof}

\begin{thm}
Suppose that both $V$ and its dual space $V^*$ are operator
systems with complete norm. Then $V$ is completely isomorphic and
complete order isomorphic to a matrix regular operator space.
\end{thm}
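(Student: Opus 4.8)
The plan is to verify, for $V$ itself, the two hypotheses of Lemma \ref{regular} and then quote that lemma. First I would fix a convenient model. Since $V$ is an operator system, Werner's theorem \cite[Theorem 4.15]{W} provides a completely isomorphic complete order isomorphism of $V$ onto a norm-closed involutive subspace $Y$ of some $B(H)$; its adjoint is a completely isomorphic complete order isomorphism $V^{*}\to Y^{*}$, so $Y^{*}$ is an operator system as well, and it is enough to prove the statement with $Y$ in place of $V$ (both the hypotheses and the conclusion being invariant under completely isomorphic complete order isomorphisms). Thus I may assume that $V$ is a norm-closed $*$-subspace of $B(H)$, with $M_n(V)^{+}=M_n(V)\cap M_n(B(H))^{+}$. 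Condition (1) of Lemma \ref{regular} is then automatic: an inequality $-y\le x\le y$ in $M_n(V)_{sa}$ holds already in $M_n(B(H))$, where it forces $\|x\|_n\le\|y\|_n$.

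Next I would exhibit a matrix regular model of the second dual. By Lemma \ref{dual}, $V^{*}$ is completely isomorphic and complete order isomorphic to a matrix regular operator space $W$; by Schreiner's duality \cite[Corollary 4.7, Theorem 4.10]{S2} the dual $W^{*}$ is matrix regular, and the adjoint of the isomorphism $V^{*}\to W$ shows that $V^{**}$ is completely isomorphic and complete order isomorphic to $W^{*}$. Transporting the norm of $W^{*}$ back produces an equivalent matrix norm $\|\cdot\|'$ on $V^{**}$, with the same matrix cones, for which $(V^{**},\|\cdot\|')$ is a matrix regular operator space; fix $c\ge 1$ with $c^{-1}\|\cdot\|\le\|\cdot\|'\le c\|\cdot\|$. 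Also, by \cite[Theorem 4.9]{S2} the canonical embedding identifies $V$ isometrically and complete order isomorphically with a subspace of $V^{**}$.

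What remains is condition (2) of Lemma \ref{regular} for $V$, and this is where the work lies. Given $x\in M_n(V)_{sa}$ with $\|x\|_n<1$ we have $\|x\|'<c$ in $V^{**}$, so by the matrix regularity of $(V^{**},\|\cdot\|')$ in the form of \cite[Theorem 3.4]{S2} there exist $\widetilde a,\widetilde d\in M_n(V^{**})^{+}$ with $\|\widetilde a\|',\|\widetilde d\|'<c$, hence $\|\widetilde a\|_n,\|\widetilde d\|_n<c^{2}$, and $\begin{pmatrix}\widetilde a & x\\ x^{*} & \widetilde d\end{pmatrix}\in M_{2n}(V^{**})^{+}$. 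I then have to replace $\widetilde a,\widetilde d$ by genuine elements of $M_n(V)^{+}$ of comparable norm, without disturbing the off-diagonal entry $x$ or the positivity of the $2\times 2$ block. For this I would use that $M_{2n}(V)^{+}$ is weak-$*$ dense in $M_{2n}(V^{**})^{+}$ (the bipolar of a norm-closed matrix cone, which is what underlies \cite[Theorem 4.9]{S2}), together with Goldstine's theorem and a Hahn--Banach separation in the pairing of $M_n(V)_{sa}\oplus M_n(V)_{sa}$ with $M_n(V^{*})_{sa}\oplus M_n(V^{*})_{sa}$: the aim is to show that the bounded convex set of admissible pairs $(a,d)\in M_n(V)^{+}\times M_n(V)^{+}$ with $\|a\|_n,\|d\|_n\le K$ and $\begin{pmatrix}a & x\\ x^{*} & d\end{pmatrix}\in M_{2n}(V)^{+}$ is weak-$*$ dense in the analogous set formed inside $V^{**}$, which contains $(\widetilde a,\widetilde d)$; hence the former set is nonempty for $K$ a fixed multiple of $c^{2}$. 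This gives condition (2) for $V$ with constant $K$, and Lemma \ref{regular} then yields that $V$ is $K$-completely isomorphic and complete order isomorphic to a matrix regular operator space. The main obstacle is precisely this last descent from $V^{**}$ to $V$: combining the weak-$*$ density of the positive cones with the norm bound and with the constraint that the $(1,2)$-entry remain $x$.
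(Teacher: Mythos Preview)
Your setup is fine and matches the paper through the point where $V^{*}$ is identified, via Lemma~\ref{dual}, with a matrix regular space $W$. The divergence---and the gap---is in what you do next. You pass to $V^{**}\cong W^{*}$, obtain $\widetilde a,\widetilde d\in M_n(V^{**})^{+}$ with $\begin{pmatrix}\widetilde a & x\\ x^{*} & \widetilde d\end{pmatrix}\ge 0$, and then try to \emph{descend} to $a,d\in M_n(V)^{+}$ while keeping the off-diagonal entry equal to $x$. You are candid that this is ``the main obstacle,'' and indeed your sketch does not close it. Weak-$*$ density of $M_{2n}(V)^{+}$ in $M_{2n}(V^{**})^{+}$ lets you approximate the whole $2\times 2$ block by positives from $M_{2n}(V)$, but nothing in Goldstine or bipolar guarantees that the approximants can be chosen with $(1,2)$-entry exactly $x$; equivalently, there is no reason the bounded convex slice $\{(a,d):\begin{pmatrix}a & x\\ x^{*} & d\end{pmatrix}\ge 0,\ \|a\|,\|d\|\le K\}$ in $V$ is weak-$*$ dense in the analogous slice in $V^{**}$, since intersections of weak-$*$ dense sets need not be weak-$*$ dense. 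The Hahn--Banach separation you allude to would have to separate an affine constraint, a cone constraint, and a ball constraint simultaneously, and you have not indicated how to do that.

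The paper avoids this descent entirely by going \emph{down} rather than up. Having the completely isomorphic complete order isomorphism $\Phi:V^{*}\to W$, it equips $V$ itself with the predual matrix norm
\[
\|x\|_{M_n(W_{*})}=\sup\{\,|\varphi(x)|:\varphi\in M_n(V)^{*},\ \|\Phi(\varphi)\|_{T_n(W)}\le 1\,\},
\]
checks that $(V,\|\cdot\|_{W_{*}},\{M_n(V)^{+}\})$ is a matrix ordered operator space with complete norm whose dual is exactly $W$, and then invokes Schreiner's duality in the form ``the predual of a matrix regular operator space is matrix regular.'' The identity $V\to W_{*}$ is the required completely isomorphic complete order isomorphism. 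No element of $V^{**}$ ever has to be pulled back into $V$, so the obstacle you isolate simply does not arise.
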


\begin{proof}
Once again we consider the space $V$ as a $*$-subspace of $B(H)$
and see that condition (1) of Lemma \ref{regular} is satisfied. By
Lemma \ref{dual}, there exist a matrix regular operator space $W$
and a completely isomorphic complete order isomorphism $\Phi : V^*
\to W$. For $x \in M_n(V)$, we define
$$\|x\|_{M_n(W_*)} = \sup \{ |\varphi(x)| : \varphi \in M_n(V)^*, \|\Phi(\varphi)\|_{T_n(W)} \le 1 \}.$$
Endowing $V$ with this matrix norm $\|\cdot\|_{W_*}$, we get an
operator space predual of $W$. Since the two norms
$\|\cdot\|_{M_n(W_*)}$ and $\|\cdot\|_{M_n(V)}$ are equivalent,
the positive cone $M_n(V)^+$ is closed with respect to the norm
$\|\cdot\|_{M_n(W_*)}$ and the operator space $(V,
\|\cdot\|_{W_*})$ is complete. Applying the conjugate linear
variation of \cite[Theorem 4.1.8]{ER} to the involution, we get
$$\begin{aligned} \|x^*\|_{M_n(W_*)} & = \sup \{ |\varphi(x^*)| :
\|\Phi(\varphi)\|_{T_n(W)} \le 1 \} \\ & = \sup \{ |\varphi(x)| :
\|\Phi(\varphi)^*\|_{T_n(W)} \le 1 \} \\ & = \|x\|_{M_n(W_*)}
\end{aligned}$$ Hence the space $W_* := (V,\{ \|\cdot\|_{M_n(W_*)} \}_{n \in \mathbb N},\{ M_n(V)^+ \}_{n \in \mathbb N})$
is a matrix ordered operator space with a complete norm, and its
dual space is $W$. Since the predual of a matrix regular operator
space is also matrix regular, $W_*$ is matrix regular. The predual
map $\Phi_* : W_* \to V$ is completely isomorphic and complete
order isomorphic.
\end{proof}

\begin{cor}
 For a matrix ordered operator space $V$ with complete norm,
 $V$ is completely isomorphic and complete order isomorphic
to a matrix regular operator space if and only if both $V$ and its
dual space $V^*$ are operator systems.
\end{cor}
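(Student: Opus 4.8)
The plan is to read the Corollary off the results already established, so the proof is short: the ``if'' direction is precisely the Theorem stated immediately above it, and the ``only if'' direction is a brief chase through Werner's characterization of operator systems together with Schreiner's duality.

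For the ``only if'' direction I would start from a completely isomorphic and complete order isomorphic map $\Psi\colon V\to W$ onto a matrix regular operator space $W$. By Theorem~\ref{main}, $W$ is an operator system, so by Werner's theorem \cite[Theorem 4.15]{W} there is a completely isomorphic and complete order isomorphic map of $W$ onto a $*$-subspace of some $B(H)$; composing it with $\Psi$ shows that $V$ is completely isomorphic and complete order isomorphic to a $*$-subspace of $B(H)$ (and its positive cone, being carried onto that of $W$, is proper), hence $V$ is an operator system. To treat $V^*$, I would invoke Schreiner's duality \cite[Corollary 4.7, Theorem 4.10]{S2}: since $W$ is matrix regular, so is $W^*$, and Theorem~\ref{main} again gives that $W^*$ is an operator system. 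It then remains to observe that the adjoint $\Psi^*\colon W^*\to V^*$ is again a completely isomorphic and complete order isomorphic map: the complete isomorphism part is the standard operator space duality, and for the order part one uses that $\Psi$ and $\Psi^{-1}$ are completely positive, so that $F\mapsto F\circ\Psi$ maps $M_n(W^*)^+=CB(W,M_n)\cap CP(W,M_n)$ into $M_n(V^*)^+$ and $G\mapsto G\circ\Psi^{-1}$ maps it back. Hence $V^*$ is an operator system.

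For the ``if'' direction I would apply the Theorem preceding the Corollary verbatim: if $V$ and $V^*$ are operator systems, then since $V$ has complete norm by hypothesis and $V^*$, being a Banach-space dual, is automatically complete, that Theorem provides a completely isomorphic and complete order isomorphic identification of $V$ with a matrix regular operator space.

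I do not anticipate a genuine obstacle. The one point worth a sentence of justification is the claim that the adjoint of a completely isomorphic and complete order isomorphic map is again one, i.e.\ that it intertwines Schreiner's dual matrix cones; this is immediate from the functoriality of $M_n(V^*)^+=CB(V,M_n)\cap CP(V,M_n)$ under composition with completely positive maps. Everything else is a formal concatenation of Theorem~\ref{main}, the Theorem above the Corollary, and the cited results of Werner and Schreiner.
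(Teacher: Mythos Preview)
Your proposal is correct and matches the paper's intent: the Corollary is stated without proof precisely because the ``if'' direction is the preceding Theorem verbatim, and the ``only if'' direction is the combination of Theorem~\ref{main} with Schreiner's duality (as the paper already notes in the paragraph following Theorem~\ref{main}), transported along the given completely isomorphic complete order isomorphism and its adjoint. Your justification that the adjoint $\Psi^*$ is again a completely isomorphic complete order isomorphism, and that the proper-cone requirement is inherited from $W$, are exactly the small points one needs to check, and you handle them correctly.
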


% ------------------------------------------------------------------------

\end{document}